\theoremstyle{plain} %text of this environment is typesetted in italics
\newtheorem{theorem}{\indent\sc Theorem}[section]
\newtheorem{lemma}[theorem]{\indent\sc Lemma}
\newtheorem{proposition}[theorem]{\indent\sc Proposition}
\theoremstyle{definition} %text of this environment is typesetted in roman letters
\newtheorem{remark}[theorem]{\indent\sc Remark}
\numberwithin{equation}{section}
\newcommand{\g}{\mathfrak{g}}
\newcommand{\fa}{\mathfrak{a}}
\newcommand{\h}{\mathfrak{h}}
\newcommand{\uG}{\mathbf{G}}
\newcommand{\bG}{\mathbf{G}}
\newcommand{\fG}{\mathfrak{G}}
\newcommand{\Lie}{\mathrm{Lie}}
\newcommand{\fLie}{\mathop{\mathfrak{Lie}}}
\newcommand{\Z}{\mathbb{Z}}
\newcommand{\K}{\mathbb{K}}
\newcommand{\C}{\mathbb{C}}
\newcommand{\Q}{\mathbb{Q}}
\newcommand{\F}{\mathbb{F}}
\newcommand{\Aut}{\mathrm{Aut}}
\newcommand{\bAut}{\mathbf{Aut}}
\newcommand{\Out}{\mathrm{Out}}
\newcommand{\bOut}{\mathbf{Out}}
\newcommand{\Isom}{\mathbf{Isom}}
\newcommand{\Hom}{\mathrm{Hom}}
\newcommand{\hA}{\widehat{A}}
\newcommand{\et}{\text{\rm \'et}}
\begin{document}
\title[Affine Kac-Moody groups and Lie algebras in the language of SGA3]{Affine Kac-Moody groups and Lie algebras in the language of SGA3}

\author[J.~Morita]{Jun Morita}
\author[A.~Pianzola]{Arturo Pianzola}
\author[T.~Shibata]{Taiki Shibata}
\dedicatory{Dedicated to Professor~Robert~V.~Moody on the~occasion of his~80th~birthday.}

%%%%%%%%%%%%%%% footnote %%%%%%%%%%%%%%%%
\subjclass[2010]{%2010 MSC numbers
Primary  14L15, 17B67, 20G44; Secondary 22E67.
}
%20G44 Kac-Moody groups
%22E67 Loop groups and related constructions, group-theoretic treatment
\keywords{%key words and phrases
Semisimple Group Schemes, Affine Kac-Moody groups, Loop groups, Twisted Chevalley Groups.
}
\thanks{%acknowledgment of support etc. if any
J.M. is supported by JSPS KAKENHI (Grants-in-Aid for Scientific Research) Grant Number~JP17K05158.
A.P. acknowledges the continuous support of NSERC and Conicet, and the wonderful hospitality of Tsukuba University.
T.S. was a Pacific Institute for the Mathematical Sciences (PIMS) Postdoctoral Fellow at the University of Alberta, and is supported by JSPS KAKENHI Grant Numbers~JP19K14517 and JP22K13905.
}

%%%%%%%%%%%% Authors' addresses %%%%%%%%%%%%%
\address[J.~Morita]{%
Institute of Mathematics\endgraf
University of Tsukuba\endgraf
1-1-1 Tennodai, Tsukuba, Ibaraki 305-8571\endgraf
Japan}
\email{morita@math.tsukuba.ac.jp}

\address[A.~Pianzola]{%
Department of Mathematical and Statistical Sciences\endgraf
University of Alberta\endgraf
Edmonton, Alberta T6G 2G1\endgraf
Canada\endgraf
and\endgraf
Centro de Altos Estudios en Ciencias Exactas\endgraf
Avenida de Mayo 866, (1084), Buenos Aires\endgraf
Argentina}
\email{a.pianzola@ualberta.ca}

\address[T.~Shibata]{%
Department of Applied Mathematics\endgraf
Okayama University of Science\endgraf
1-1 Ridai-cho Kita-ku, Okayama, Okayama 700-0005\endgraf
Japan}
\email{shibata@ous.ac.jp}
%%%%%%%%%%%%%%%%%%%%%%%%%%%%%%%%%%%%%%%%%

\maketitle

\begin{abstract}
In infinite-dimensional Lie theory, the affine Kac-Moody Lie algebras and groups play a distinguished role due to their many applications to various areas of mathematics and physics. Underlying these infinite-dimensional objects there are closely related group schemes and Lie algebras of finite type over Laurent polynomial rings. The language of SGA3 is perfectly suited to describe such objects. The purpose of this short article is to  provide a natural description of the affine Kac-Moody groups and Lie algebras using this language.
\end{abstract}

\section{Recollections of affine Kac-Moody groups and Lie algebras}
Kac-Moody Lie algebras were independently discovered by V.~Kac \cite{Kac1} and R.~V.~Moody \cite{Mdy1}. Given a field $\K$ of characteristic $0$, they are defined by generators and relations (\`a la Chevalley-Harish-Chandra-Serre) encoded in a generalized Cartan matrix (GCM) $A$. If $A$ is a Cartan matrix of type $X$, then the corresponding Kac-Moody Lie algebra is nothing but the split simple finite-dimensional $\K$-Lie algebra of type $X$. Closely related to these are the affine Lie algebras, see \cite[Chapter7]{Kac2} and \cite{Mdy2}.

The definition of ``simply connected'' Kac-Moody groups over fields of characteristic $0$ appears in \cite{PK}. This paper establishes the conjugacy theorem of ``Cartan subalgebras'' of symmetrizable Kac-Moody Lie algebras and, as a consequence, that the GCMs and corresponding root systems are an invariant of the algebras. Detailed expositions of this material are given in \cite{Kmr} and \cite{MP}.

If $A$ is of finite type $X$, the corresponding ``groups'' (with the simply connected being the largest) exist (Chevalley) and are unique (Demazure). They are smooth group schemes over $\Z$ constructed using a ``root datum'' that includes $A$. The base change to $\K$ produces a linear algebraic group over $\K$ whose Lie algebra is split simple of type $X$ if $\K$ is of characteristic $0$. The first incursion into Kac-Moody groups  for $\K$ arbitrary is given in \cite{MT}. A clear exposition of the theory at that time is given in \cite{Tit1}.

J.~Tits pioneered the idea of defining root datum based on GCMs, and attaching to them group functors that ``behave right'' when evaluated at arbitrary fields \cite{Tit2}. See also \cite{Rem}. Some of the affine cases are discussed in examples. Further clarity about the nature of the abstract groups obtained in this fashion is given by the construction of Kac-Moody groups by means of ind-schemes and flag varieties due to O.~Mathieu in \cite{Mth}. Work related to this fertile approach include \cite{HLR}, \cite{L} and \cite{PR}.\footnote{\, We are focusing on algebraic/geometric treatments. In addition to these, there are many topological and analytical approaches to the topic. These will not be discussed.}

While the works described above deal with all infinite-dimensional Kac-Moody groups and algebras, our focus is the affine case using the techniques of \cite{SGA3}. Our ``groups'' are group schemes. Any such object has a Lie algebra defined by means of dual numbers and --\,as it should be, and in analogy with the classical theory\,-- this is the way that the affine algebras appear when the base field is of characteristic $0$.
%\medskip
%
%\centerline{***}
%\smallskip

\subsection{} \label{section1.1}
Throughout this paper $X_N$ will denote the type and rank of an irreducible indecomposable finite root system in the sense of Bourbaki \cite{Bo}. Thus
\[
X_N = A_{N \geq 1},\, B_{N\geq 2},\, C_{N \geq3},\, D_{N \geq 4},\, G_2,\, F_4,\, E_6,\, E_7,\, \text{or} \,\, E_8.
\]

To each of these types corresponds a unique up to isomorphism simply-connected Chevalley-Demazure group scheme $\bG$ over $\Z$, and a split simple finite-dimensional Lie algebra $\g$ over $\Q$. With the notation of \cite{DG}, we have
\[
\Lie(\bG) \otimes_\Z \Q \simeq \g.
\]

By \cite[Exp.~XXIV~Theo.~1.3]{SGA3} we have a split exact sequence of affine $\Z$-groups
\begin{equation}\label{exact}
1 \longrightarrow \bG_{\rm ad} \longrightarrow \bAut(\uG) \overset{\rho}{\longrightarrow} \bOut(\g) \longrightarrow 1,
\end{equation}
where $\bG_{\rm ad}$ is the adjoint group of $\bG$ and $\bOut(\g)$ is the finite constant group over $\Z$ corresponding to the finite (abstract) group $\Out(\g)$ of symmetries of the Coxeter-Dynkin diagram of $\g$.

We once and for all fix a section of $\rho:\bAut(\uG)\to\bOut(\g)$ and a Killing couple $(\mathbf{B},\mathbf{H})$ of $\bG$ (see \cite[Exp. XXII 5.3.13]{SGA3}) so that $\bOut(\g)$ is viewed as ``diagram automorphisms'' of $\uG$ (see \cite[Exp. XXII Cor. 5.5.5]{SGA3} and \cite[Exp. XXIV Theo. 1.3(iii)]{SGA3}). That is, if $\Delta$ is the root system and $\Pi$ the base of $\Delta$ corresponding to $(\mathbf{B},\mathbf{H})$, and if $\sigma \in \bOut(\g)(\Z) = \bOut(\g)(\Q) = \Out(\g)$, then $\sigma$ (viewed as an automorphism of $\uG$ via our section) stabilizes $\mathbf{H}$ and its transpose permutes the elements of $\Pi$ in the way that $\sigma$ does.\footnote{\, In \cite{SGA3} the group $\bOut(\g)$ is denoted by ${\rm \bf Autext}(\bG).$ The content of the assertions of this last sentence is \cite[Exp. XXIV Theo. 1.3(iii)]{SGA3} which was mentioned above.}

 If we fix $(\mathbf{B}, \mathbf{H})$
and our section $\rho$ all of the above considerations hold for $\bG_R$ for any ring $R$ and are functorial on $R.$

The abelian Lie algebra $\h = \Lie(\mathbf{H}) \otimes_\Z \Q$ is a split Cartan subalgebra of $\g$. Similar considerations apply to $\sigma$ viewed as an automorphism of $\g$: It stabilizes $\h$ and permutes the elements of $\Pi$ as above.

\begin{remark}
This dual interpretation of the elements of $\Out(\g)$ is present throughout our work. In every case it will be clear which of the two interpretations is used. 
\end{remark}

\begin{remark}\label{Z}
By base change the exact sequence \eqref{exact} exists if we replace $\Z$ by an arbitrary ring $R$.
\end{remark}

\subsection{}
Following Kac's notation \cite{Kac2}, the sixteen types of affine Kac-Moody Lie algebras and groups will be denoted by $X_N^{(r)}$. The possibilities are
\[
A_1^{(1)},\, A_{N \geq 2}^{(1,2)},\, B_{N \geq 2}^{(1)},\, C_{N \geq 2}^{(1)},\, D_{4}^{(1,2,3)},\, D_{N \geq 5}^{(1,2)},\,\, G_2^{(1)},\,\, F_4^{(1)},\,\, E_6^{(1,2)},\,\, E_7^{(1)},\, \text{or} \,\, E_8^{(1)}.
\]

The $r$ appearing in the $X_N^{(r)}$ is the order of an element $\sigma\in\Out(\g)$. Such an element is unique up to conjugacy and can be used to realize the affine algebras (derived modulo their centres) as loop algebras over the complex numbers. These are also Lie algebras over the ring $\C[t^{\pm1}]$. As such, they are of absolute type $X_N$; i.e., they become split of type $X_N$ by base change to an \'etale covering $\C[t^{\pm\frac{1}{r}}]$ of $\C[t^{\pm1}]$. See \cite{P1} for details. From this point of view the sixteen affine types are denoted by the pairs $(\g,\sigma)$, where $\g$ is a finite dimensional split simple Lie algebra, and $\sigma$  a Dynkin diagram automorphism of $\g$ of order $r$. We will use both notations since each has its own advantages depending on the context.

\begin{remark}
An alternative notation for the affine Lie algebras is that of \cite{MP} which is closer to the work of Tits on the classification of semisimple algebraic groups \cite{Tit4}. For example the loop algebra of type $D_4^{(3)}$ in Kac's notation, is of type $G_2^{(3)}$ in \cite{MP}. The reason is that when passing to the generic fibre of the corresponding loop algebra, we obtain a simple Lie algebra over $\C(t)$ where the maximal abelian diagonalizable subalgebras are 2-dimensional and the relative root system is of type $G_2$. By passing to $\C(t^{\frac{1}{3}})$ the Lie algebra becomes split of type $D_4$. 
\end{remark}
\medskip

\centerline {***}
\medskip

In what follows $\F$ will denote a prime field. Thus either $\F=\Q$ or $\F=\F_p$ for some prime number $p$. Every field $\K$ contains a (unique up to unique isomorphism) prime subfield.

If $\K$ is of characteristic $0$ we let $\hat \g_\K(X_N^{(r)})$ denote the affine Kac-Moody Lie algebra of type $X_N^{(r)}$ over $\K$. The derived algebra of $\hat\g_\K(X_N^{(r)})$ has a 1-dimensional centre, and the corresponding central quotient will be denoted by $\g_\K(X_N^{(r)})$.
\smallskip

The corresponding affine Kac-Moody group of simply connected type can be defined for $\K$ arbitrary (see \cite{Tit3} and \cite{MorPiaShi21}) and will be denoted by $\hat G_\K(X_N^{(r)})$.\footnote{\, More precisely, $\hat G_\mathbb{K}(X_N^{(r)})$ is the subgroup of $\mathrm{Aut}_\mathbb{K}(V_\mathbb{K})$ generated by $x_\beta(\nu)$ for all $\beta \in \Delta^{\rm re}$ and for all $\nu \in \mathbb{K}$. Here $V_\mathbb{K} = \mathbb{K} \otimes_{\mathbb{Z}} V_\mathbb{Z}$ is defined by a certain admissible pair $(V,V_\mathbb{Z})$ of simply connected type, and $\Delta^{\rm re}$ is the set of real roots of the corresponding Kac-Moody Lie algebra (cf.~\cite[A.1-A.5]{MorPiaShi21}).}

\section{Statement of the Main Theorem}
The main result to be established in this paper is the following.
\begin{theorem} \label{MainShort}
Let $X_N^{(r)} = (\g, \sigma)$ be an affine type. Let $\F$ be a prime field of characteristic different than $r,$ and $\K$ an arbitrary field extension of $\F$. There exists a simply connected semisimple group scheme $\uG_{(\g,\sigma)}$ over $\F[t^{\pm1}]$ such that:
\begin{enumerate}
\item
$\uG_{(\g,\sigma)}(\K[t^{\pm1}]) \simeq \hat G_\K(X_N^{(r)})/\K^\times$.
\item
If $\F=\Q$, there exist a natural $\K[t^{\pm1}]$-Lie algebra isomorphism 
\[
\fLie (\uG_{(\g,\sigma)})(\K[t^{\pm1}]) \simeq \g_{\K}(X_N^{(r)}).
\]
Furthermore, $\uG_{(\g,\sigma)}$ is unique up to isomorphism.
\end{enumerate}
\end{theorem}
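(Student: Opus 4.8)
The plan is to construct $\uG_{(\g,\sigma)}$ as a twisted form of $\bG_{\F[t^{\pm1}]}$ by descent, and then to verify the three assertions by evaluating this group scheme and its Lie algebra. Recall that the affine type $(\g,\sigma)$ with $r = \mathrm{ord}(\sigma)$ determines, over any $\F$ with $\mathrm{char}\,\F \nmid r$, a connected Galois covering $S = \F[t^{\pm\frac{1}{r}}]$ of $R = \F[t^{\pm1}]$ with Galois group $\mathbf{\mu}_r$ (or $\Z/r\Z$ once we have fixed a primitive $r$-th root of unity, which exists over $\F$ precisely because $\mathrm{char}\,\F \neq r$ — this is where the hypothesis is used). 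The cocycle $\gamma$ sending the canonical generator of the Galois group to $\sigma \in \Out(\g) \subseteq \bAut(\bG)(\F)$, via our once-and-for-all fixed section of $\rho$ in \eqref{exact}, is a $1$-cocycle in $Z^1(S/R, \bAut(\bG))$ (the cocycle condition is immediate since $\sigma$ has order $r$ and is defined over $\F$). Twisting $\bG_R$ by $\gamma$ yields a semisimple $R$-group scheme $\uG_{(\g,\sigma)} := {}_\gamma(\bG_R)$; because the twisting is by inner-by-diagram automorphisms and $\bG$ was simply connected, $\uG_{(\g,\sigma)}$ is again simply connected semisimple. Its generic fibre is the simple simply connected group over $\F(t)$ whose Tits index corresponds to $X_N^{(r)}$, and base change along $S/R$ splits it back to $\bG_S$.

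Next I would establish (3), uniqueness: by construction $\uG_{(\g,\sigma)}$ is a loop torsor (it is trivialized by the étale covering $S/R$), hence an $R$-form of $\bG_R$ classified by an element of $H^1_{\et}(R, \bAut(\bG))$. Two such forms are isomorphic as $R$-group schemes iff the cocycles are cohomologous. One then invokes the computation of loop torsor cohomology over Laurent polynomial rings (as in \cite{P1} and the references on twisted forms therein): the class of a loop cocycle is determined up to equivalence by the pair (conjugacy class of $\sigma$ in $\Out(\g)$, underlying finite-dimensional split type $\g$), and conversely each affine type $X_N^{(r)}$ corresponds to exactly one such equivalence class. Since $\sigma$ is the fixed element of order $r$ attached to the type $X_N^{(r)}$ — unique up to conjugacy as recalled in the text — the form $\uG_{(\g,\sigma)}$ is well-defined independently of the choices, and uniqueness follows.

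For assertion (1), I would specialize $R$ to $\K[t^{\pm1}]$ and compute $\uG_{(\g,\sigma)}(\K[t^{\pm1}])$ using that this group of points is, by Galois descent along $\K[t^{\pm\frac{1}{r}}]/\K[t^{\pm1}]$, the group of $\sigma$-twisted fixed points of $\bG(\K[t^{\pm\frac{1}{r}}])$ under the composite of the Galois action and $\sigma$. This twisted loop group is exactly the ``twisted Chevalley group over a Laurent ring'' presentation of $\hat G_\K(X_N^{(r)})$ modulo its centre: the real-root unipotent generators $x_\beta(\nu)$ of the footnote correspond, under loop realization, to the root group elements of $\bG$ over $\K[t^{\pm\frac{1}{r}}]$ adapted to the $\sigma$-grading, and the centre $\K^\times$ is precisely the kernel one quotients by. I expect one invokes here the explicit comparison between Tits' construction \cite{Tit3}, the admissible-pair construction of \cite{MorPiaShi21}, and the twisted loop group — this identification is essentially known, but making the isomorphism canonical (functorial in $\K$) requires care with the centre and with how the diagram-automorphism section interacts with the adjoint quotient. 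For assertion (2), with $\F = \Q$, one uses the SGA3 Lie functor: $\fLie(\uG_{(\g,\sigma)})(\K[t^{\pm1}])$ is, by the same descent, the $\sigma$-twisted loop algebra $(\g \otimes_\Q \K[t^{\pm\frac{1}{r}}])^{\sigma}$, which is Kac's loop-algebra realization of the derived-modulo-centre affine algebra $\g_\K(X_N^{(r)})$ (see \cite{Kac2} Ch.~8 and \cite{P1}); naturality of the Lie functor in $\K$ gives the ``natural'' in the statement.

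The main obstacle will be assertion (1): the other parts are, in effect, repackagings of the descent/twisted-forms formalism plus the known cohomology computation, but the identification $\uG_{(\g,\sigma)}(\K[t^{\pm1}]) \simeq \hat G_\K(X_N^{(r)})/\K^\times$ is a comparison between two a priori different constructions of ``the'' affine Kac-Moody group — Tits' amalgam/representation-theoretic one on one side, and the $R$-points of a concrete $R$-group scheme on the other — and pinning down that the simply connected group scheme really sees the \emph{simply connected} Kac-Moody group (not a quotient or cover), along with exactly which central $\K^\times$ gets killed, is the delicate point. I would address it by matching generators and relations: both groups are generated by real-root subgroups with the same Steinberg-type relations, so it suffices to exhibit the generator correspondence and check that the centre of $\hat G_\K(X_N^{(r)})$ maps isomorphically onto $\K^\times \subseteq \hat G_\K(X_N^{(r)})$ and onto the kernel of $\hat G_\K \to \uG_{(\g,\sigma)}(\K[t^{\pm1}])$.
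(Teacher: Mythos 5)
There is a genuine gap in the construction itself, concentrated in the case $r=3$ (type $D_4^{(3)}$). You assert that a primitive $r$-th root of unity ``exists over $\F$ precisely because $\mathrm{char}\,\F \neq r$''. This is false: $\Q$ contains no primitive cube root of unity, and neither does $\F_p$ for $p \equiv 2 \pmod 3$. Consequently $\F[t^{\pm 1/3}]$ is \'etale of degree $3$ over $\F[t^{\pm1}]$ but is \emph{not} Galois with group $\Z/3\Z$ (it is a $\mu_3$-torsor, and $\mu_3 \not\simeq \Z/3\Z$ here, so the constant element $\sigma$ of order $3$ does not furnish descent data for this covering). Since $\F=\Q$ is exactly the case needed for part (2), this breaks the construction where it matters most. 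The paper's fix is to pass to the larger extension $S=\F(\xi)[t^{\pm1/3}]$, which \emph{is} Galois over $\F[t^{\pm1}]$ with group $S_3=\langle\sigma',\omega'\rangle$, and to exploit $\Out(\g)\simeq S_3$ for $\g$ of type $D_4$ to define an $S_3$-semilinear action (one must choose, in addition to $\sigma$, an order-$2$ diagram automorphism $\omega$). A related point you miss: even once $\uG_{(\g,\sigma)}$ is defined over $\F[t^{\pm1}]$, computing its $\K[t^{\pm1}]$-points for an arbitrary extension $\K/\F$ is not a formal base change, because the relevant Galois group $\Gamma(\K)$ changes ($S_3$ versus $\Z/3\Z$) according to whether $\xi\in\K$; the paper devotes a separate argument (Lemma~\ref{prp:base_change}) to reconciling the two descents.

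Two smaller remarks. For assertion (1) you correctly flag the comparison with the Kac-Moody group as the delicate point; the paper outsources exactly that step (Theorem~\ref{MainLong}, quoted from \cite{MorPiaShi21}), and what it actually proves is the descent identity $\uG(S)^\Gamma\simeq\uG_{(\g,\sigma)}(R)$ (Lemma~\ref{prp:Twi-Sch_lem}) together with the base-change lemma above, not a fresh generators-and-relations matching. And your uniqueness argument establishes only that the \emph{construction} is independent of the choice of $\sigma$ within its conjugacy class, which is not what the theorem asserts: the claim is that \emph{any} simply connected semisimple $R$-group $\fG'$ satisfying (2) is isomorphic to $\uG_{(\g,\sigma)}$. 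To get from an isomorphism of $R$-Lie algebras to an isomorphism of $R$-groups the paper uses the canonical identification $\Isom(\fG,\fG')\simeq\Isom\big(\fLie(\fG),\fLie(\fG')\big)$ of \cite[Exp.~XXIV~Prop.~7.3.1]{SGA3}; some such bridge is missing from your outline.
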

The terminology and notation for group schemes and their Lie algebras are that of \cite{DG} and \cite{SGA3}. That the infinite-dimensional $\K$-Lie algebra $\g_{\K}(X_N^{(r)})$ has a natural $\K[t^{\pm1}]$ structure will be explained in \S\ref{Section:Proof}.

\section{The affine groups in terms of fixed points} \label{Section:Affine}
Let $\uG$ and $\g$ be as in Section~\ref{section1.1},  and let $\F$ and $\K$ be as above. Set $R_\K =\K[t^{\pm1}]$. 

The abstract groups $\uG(R_\K)$ of the various $\uG$ are the so-called $\K$-{\it loop groups}.  In this section, we introduce the twisted version of $\K$-loop groups of $\uG$. To construct them, we use our Dynkin diagram automorphism $\sigma$ of order $r$ and the Galois extensions $S_{(\K)}$  of $R_\K$ with Galois group $\Gamma$ defined below.
\medskip

\noindent{\bf Case~I:}
$\sigma$ is of order $r=1$ (i.e., $\sigma=\mathrm{id}$). We let $S_{(\K)}=R_\K = \K[t^{\pm1}]$ and $\Gamma=1$.
\medskip

\noindent{\bf Case~II:}
$\sigma$ is of order $r=2$ and $\F \neq \F_2$. We let $S_{(\K)}:=\K[t^{\pm {\frac{1}{2}}}]$, and let $\sigma'$ be the unique $\K$-algebra automorphism $S_{(\K)}$ defined by $\sigma'(t^{\frac{1}{2}}) = -t^{\frac{1}{2}}$. Set $\Gamma:=\langle\sigma'\rangle \simeq \Z/2\Z$.
\medskip

\noindent{\bf Case~III:}
$\sigma$ is of order $r=3$ (in particular $\g$ is of type $D_4$) and $\F \neq  \F_3$. We fix an element (unique up to conjugation) $\omega\in\Out(\g)$ of order $2$. Then $\sigma$ and $\omega$ generates $\Out(\g) \simeq S_3$.

Let $\xi$ be a primitive third root of unity in a fixed algebraic closure $\overline{\F}$ of $\F$. Without loss of generality we fix an algebraic closure $\overline{\K}$ of $\K$ containing $\overline{\F}$. The discussion is divided into two cases:

{\bf (a)} $\xi\in\K$.
In this case, let $S_{(\K)}:=\K[t^{\pm \frac{1}{3}}]$, and let $\sigma'$ be the unique $\K$-algebra automorphism of $S_{(\K)}$ defined by $\sigma'(t^{\frac{1}{3}})=\xi t^{\frac{1}{3}}$. We set $\Gamma:= \langle\sigma'\rangle \simeq \Z/3\Z$.

{\bf (b)} $\xi\not\in\K$.
In this case, we set $S_{(\K)}:=\K(\xi)[t^{\pm \frac{1}{3}}]$, where $\K(\xi)$ is the quadratic field extension obtained by adjoining $\xi$ to $\K$. Let $\sigma'$ be the unique $\K(\xi)$-algebra automorphism of $S_{(\K)}$ such that $\sigma'(t^{\frac{1}{3}}) = \xi t^{\frac{1}{3}}$, and let $\omega'$ be the unique $\K[t^{\pm \frac{1}{3}}]$-algebra automorphism $S_{(\K)}$ such that $\omega'(\xi)=\xi^2=\xi^{-1}$. We set $\Gamma:=\langle\sigma',\omega'\rangle \simeq S_3$.

We leave it to the reader to check that, thus defined, $S_{(\K)}$ is a Galois extension of $R_\K$ with Galois group $\Gamma$. See \cite{CHR}, \cite{KO} and \cite{Wat} for generalities about Galois extensions of rings.
\smallskip

We are now ready to describe the connection between twisted loop groups and affine Kac-Moody groups.
\smallskip

For a root $\alpha \in \Delta$ and $u\in S_{(\K)}$, we denote the associated unipotent elements of $\uG\big(S_{(\K)}\big)$ by $x_\alpha(u)$. Since $S_{(\K)}$ is a Euclidean ring, it is known that $\uG\big(S_{(\K)}\big)$ is generated by such $x_\alpha(u)$'s, see \cite[Chapter~8]{Ste}. We define a $\Gamma$-action on $\uG\big(S_{(\K)}\big)$ as follows.
\[
\sigma'(x_\alpha(u)) = x_{\sigma(\alpha)}(k_\alpha \sigma'(u)) \quad\text{and}\quad \omega'(x_\alpha(u)) = x_{\omega(\alpha)}(\omega'(u)).
\]
Here, the sign $k_\alpha=\pm1$ is defined in \cite[\S3]{Abe77} when $r=1,2$ and in \cite[\S3]{MorPiaShi21} when $r=3$. The $\Gamma$-fixed points subgroup $\uG\big(S_{(\K)}\big)^\Gamma$ of the various $\uG\big(S_{(\K)}\big)$ are the {\it twisted $\K$-loop groups}.

\begin{remark}\label{iso}
In type $D_4$, the elements $\sigma$ and $\omega$ of $\Out(\g) \simeq S_3$ are unique up to conjugacy. The resulting twisted loop groups are independent, up to isomorphism, of the choice of these two elements.
\end{remark}

\begin{remark}
While the structure of $\Gamma$ is independent of $\K$ for $r=1,2$, this is not the case for $r=3$. To avoid any possible confusion in what follows we will write $\Gamma(\K)$ instead of $\Gamma$ if necessary.
\end{remark}

In \cite[Prop.~6.6]{MorPiaShi21}, we constructed a group homomorphism $\Phi$ from the Kac-Moody group $\hat G_\K(X_N^{(r)})$ to $\uG\big(S_{(\K)}\big)$ whose image coincides with the twisted loop group $\uG\big(S_{(\K)}\big)^{\Gamma(\K)}$. Moreover, it is shown that $\hat G_\K(X_N^{(r)})$ is a 1-dimensional central extension of the twisted $\K$-loop group ({\it ibid.}~Theo.~6.7). This yields
\begin{theorem}\label{MainLong}
There exists a group isomorphism $\hat G_\K(X_N^{(r)})/\K^\times \simeq \uG\big(S_{(\K)}\big)^{\Gamma(\K)}$.
\end{theorem}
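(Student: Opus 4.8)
The plan is to realize the claimed isomorphism as the map induced on the quotient by the central $\K^\times$ by the homomorphism $\Phi$ of \cite[Prop.~6.6]{MorPiaShi21}. First I would recall the construction of $\Phi \colon \hat G_\K(X_N^{(r)}) \to \uG\big(S_{(\K)}\big)$, which sends each real root group generator $x_\beta(\nu)$ of the Kac--Moody group to the corresponding unipotent element of $\uG\big(S_{(\K)}\big)$. The $\Gamma(\K)$-equivariance built into the definition of the $\Gamma$-action on $\uG\big(S_{(\K)}\big)$ (through the signs $k_\alpha$) shows immediately that $\mathrm{im}(\Phi) \subseteq \uG\big(S_{(\K)}\big)^{\Gamma(\K)}$. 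The reverse inclusion --- that every $\Gamma(\K)$-fixed point of $\uG\big(S_{(\K)}\big)$ is a product of images of the $x_\beta(\nu)$ --- is the substantive half of \cite[Prop.~6.6]{MorPiaShi21}: it uses that $S_{(\K)}$ is a Euclidean ring, so that $\uG\big(S_{(\K)}\big)$ is generated by the $x_\alpha(u)$ in the sense of \cite[Chapter~8]{Ste}, together with a rewriting argument extracting from an arbitrary such product one that is manifestly $\Gamma$-invariant.

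With surjectivity of $\Phi$ onto $\uG\big(S_{(\K)}\big)^{\Gamma(\K)}$ established, it remains to identify $\ker \Phi$. Here I would invoke \cite[Theo.~6.7]{MorPiaShi21}, which asserts that $\hat G_\K(X_N^{(r)})$ is a central extension of the twisted $\K$-loop group with one-dimensional kernel; concretely this produces the exact sequence
\[
1 \longrightarrow \K^\times \longrightarrow \hat G_\K(X_N^{(r)}) \overset{\Phi}{\longrightarrow} \uG\big(S_{(\K)}\big)^{\Gamma(\K)} \longrightarrow 1,
\]
so that $\ker \Phi = \K^\times$. The first isomorphism theorem then yields $\hat G_\K(X_N^{(r)})/\K^\times \simeq \uG\big(S_{(\K)}\big)^{\Gamma(\K)}$, as claimed.

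The main obstacle is concentrated in the kernel computation, i.e.\ in showing that $\Phi$ introduces no relations beyond those coming from the canonical central $\K^\times$: one must control the defining relations among the $x_\beta(\nu)$ (the Steinberg commutator relations and the torus relations) precisely enough to see that their images generate exactly the relations of the twisted loop group together with a one-dimensional centre, and in particular that no element outside the central $\K^\times$ is killed. For $r=1,2$ this rests on the theory of Chevalley groups over rings (\cite{Abe77} and related work), while for $r=3$ it is the content of the sign computations of \cite[\S3]{MorPiaShi21}. In all cases the delicate point is the behaviour over the ramified cover $S_{(\K)}/R_\K$ and the compatibility of the cocycle defining the central extension with the $\Gamma(\K)$-action --- especially in Case~III(b), where $\Gamma(\K) \simeq S_3$ mixes the arithmetic extension $\K(\xi)/\K$ with the geometric one, so that one must keep track of both the Galois action on the coefficients and the diagram automorphism simultaneously.
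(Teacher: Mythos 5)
Your proposal is correct and follows essentially the same route as the paper: the result is obtained directly from \cite[Prop.~6.6]{MorPiaShi21} (identifying $\mathrm{im}(\Phi)$ with $\uG\big(S_{(\K)}\big)^{\Gamma(\K)}$) and \cite[Theo.~6.7]{MorPiaShi21} (identifying $\ker\Phi$ with the central $\K^\times$), followed by the first isomorphism theorem. Your additional remarks correctly locate where the real work lives, namely in the cited results rather than in this theorem itself.
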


\section{Proof of the Main Theorem}\label{Section:Proof}
Let $\uG,$ $\g$, $\F$ and $\K$ be as in the previous Section. Set $R=\F[t^{\pm1}],$ and let $A$ be the $R$-Hopf algebra representing $\uG_R$. 

 \begin{remark}\label{identify} In what follows, we will (anti-)identify without further reference $\bAut(\bG_R)$ with the group of automorphisms $\Aut(A)$ of $R$-Hopf algebra automorphisms of $A$. In particular, the elements of ${\rm Out}(\g) = \bOut(\g)(R)$ are viewed as elements of $\Aut(A)$ (see \S 1.1 above).
%Similar considerations apply if we replace $A$ by the $\Z$-Hopf algebra $A_0$ representing $\bG.$ 
 \end{remark}

\subsection{}
We next turn to the construction of $\uG_{(\g,\sigma)}$ linked to $\hat G(X_N^{(r)})$.  We do so by defining its corresponding $R$-Hopf algebra $\hA=R[\bG_{(\g,\sigma)}]$. This we will do by the yoga of Galois descent:
\medskip

{\it Let $S/R$ be a Galois extension with Galois group $\Gamma$. To give a Hopf algebra $\hA$ over $R$ with the property that $\hA\otimes_R S \simeq A\otimes_R S$ {\rm is the same} as to give an action of $\Gamma$ on $A\otimes_R S$ by semilinear Hopf algebra automorphisms.}
\medskip

Consider the Galois extensions of $R$ given in \S\ref{Section:Affine} when $\K = \F$. Thus $R=\F[t^{\pm1}],$ and
\smallskip

$S = R$ and $\Gamma = {1}$ if $r=1$.
\smallskip

$S = \F[t^{\pm {\frac{1}{2}}}]$ and $\Gamma=\langle\sigma'\rangle \simeq \Z/2\Z$ if $r=2$.
\smallskip

$S = \F[t^{\pm \frac{1}{3}}]$ and $\Gamma=\langle \sigma'\rangle \simeq \Z/3\Z$ if $r=3$ and $\xi \in \F$.
\smallskip

$S = \F(\xi)[t^{\pm \frac{1}{3}}]$ and $\Gamma=\langle\sigma',\omega'\rangle \simeq S_3$ if $r=3$ and $\xi \notin \F$.
\medskip
 
 Define an action of $\Gamma$ as a Hopf algebra automorphism of $A\otimes_R S$ via
\[
{}^\gamma(a\otimes s) \,\,=\,\, \gamma(a)\otimes {}^{\gamma'}s \]
for all $a \in A$, $s \in S$ and $\gamma \in \Gamma$.\footnote{\, Recall that by construction $\Gamma$ is identified with an explicit subgroup of $\Out(\g)$. The meaning of $\gamma(a)$ is given by Remark \ref{identify}}
This action is visibly $\Gamma$-semilinear. We define $\hA=(A\otimes_R S)^\Gamma$ and denote by $\uG_{(\g,\sigma)}$ the corresponding $R$-group scheme.
\begin{remark}\label{Reductive} $\uG_{(\g,\sigma)}$ is a reductive $R$-group. Indeed $\hA \otimes_R S \simeq A \otimes_R S.$ Thus $\uG_{(\g,\sigma)} \times_R S \simeq \bG_S.$ Since $\bG_S$ is reductive, so is $\uG_{(\g,\sigma)}$ by faithfully flat descent. In particular, $\uG_{(\g,\sigma)}$ is smooth. 
\end{remark}
\medskip
%\Recall that $\hA=(A \otimes_R S)^\Gamma$ is \the coordinate ring of our $R$-group $\uG_{(\g,\sigma)}$.

\begin{lemma} \label{prp:Twi-Sch_lem}
The restriction map
\[
\Hom_S(A\otimes_R S, S) \longrightarrow \Hom_R(\hA,S); \quad f\longmapsto f|_{\hA}
\]
induces an abstract group isomorphism $\uG(S)^\Gamma \overset{\simeq}{\longrightarrow} \uG_{(\g,\sigma)}(R)$.
\end{lemma}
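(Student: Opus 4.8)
The plan is to identify both sides with concrete sets of algebra homomorphisms and then show that Galois descent translates ``$\Gamma$-invariance of an $S$-point'' into ``factoring through the descended Hopf algebra $\hA$''. Recall that for any $R$-algebra $T$ we have $\uG_{(\g,\sigma)}(T) = \Hom_{R\text{-alg}}(\hA, T)$ and $\uG(T') = \uG_R(T') = \Hom_{R\text{-alg}}(A, T') = \Hom_{S\text{-alg}}(A\otimes_R S, T')$ for any $S$-algebra $T'$. The first observation is that the restriction map in the statement is well defined: if $f\colon A\otimes_R S \to S$ is $S$-linear then $f|_{\hA}\colon \hA \to S$ is $R$-linear (it is the composite $\hA \hookrightarrow A\otimes_R S \xrightarrow{f} S$), and it is an algebra homomorphism because $\hA$ is a subalgebra of $A\otimes_R S$. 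So the map $f\mapsto f|_{\hA}$ lands in $\Hom_R(\hA, S)$.

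Next I would describe the $\Gamma$-action on $\uG(S) = \Hom_{S\text{-alg}}(A\otimes_R S, S)$ and pin down exactly which $f$ are fixed. Here $\Gamma$ acts on the source $A\otimes_R S$ by the semilinear automorphisms ${}^\gamma(a\otimes s)=\gamma(a)\otimes{}^{\gamma'}s$ defined in \S\ref{Section:Proof}, and on the target $S$ by the Galois action $s\mapsto {}^{\gamma'}s$; the action on $f$ is $({}^\gamma f)(z) = {}^{\gamma'}\!\big(f({}^{\gamma^{-1}}z)\big)$. A point $f$ is $\Gamma$-fixed precisely when $f$ is $\Gamma$-equivariant for these two actions, i.e.\ $f\circ{}^{\gamma} = {}^{\gamma'}\circ f$ for all $\gamma$. (One should double-check that under the identification $\uG(S)=\uG_R(S)$ this $\Gamma$-action is the one meant in the statement of the lemma, namely the one used in \S\ref{Section:Affine} to form the twisted loop group; this is a compatibility-of-conventions check between the group-theoretic $\Gamma$-action on $x_\alpha(u)$ and the Hopf-algebraic one, and it is where a little care is needed.)

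The core of the argument is then a descent dictionary. By faithfully flat (indeed Galois) descent, $\hA = (A\otimes_R S)^\Gamma$ and the natural map $\hA\otimes_R S \to A\otimes_R S$ is an isomorphism; moreover $S$ itself satisfies $S^\Gamma = R$. Given a $\Gamma$-fixed $f\in\uG(S)$, restrict to $\hA$: this gives a homomorphism $\bar f = f|_{\hA}\in\Hom_R(\hA, S)$, and because $f$ is $\Gamma$-equivariant and $\Gamma$ acts trivially on $\hA$, the image $\bar f(\hA)$ lies in $S^\Gamma = R$; thus $\bar f$ factors through an element of $\uG_{(\g,\sigma)}(R) = \Hom_R(\hA, R)$. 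Conversely, given $g\in\Hom_R(\hA, R)$, base change along $R\to S$ to get $g\otimes_R S\colon \hA\otimes_R S \to S$, and transport along the descent isomorphism $\hA\otimes_R S\simeq A\otimes_R S$ to obtain an $S$-point $\tilde g\in\uG(S)$; one checks $\tilde g$ is $\Gamma$-fixed (its restriction to $\hA$ has image in $R$, which is exactly the equivariance condition, using that $A\otimes_R S$ is generated over $\hA$ by $S$) and that $\tilde g|_{\hA} = g$ composed with $R\hookrightarrow S$. These two constructions are mutually inverse, giving the bijection $\uG(S)^\Gamma \xrightarrow{\ \simeq\ } \uG_{(\g,\sigma)}(R)$. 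Finally, all maps in sight are natural in the Hopf-algebra structure — comultiplication, counit and antipode of $\hA$ are the restrictions of those of $A\otimes_R S$, and the descent isomorphism is one of Hopf algebras — so the bijection is a group isomorphism; this is routine once the bijection is set up.

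The main obstacle I anticipate is not the descent itself but the bookkeeping of \emph{which} $\Gamma$-action is in play and making the equivalence ``$f$ is $\Gamma$-fixed $\iff$ $f$ is $\Gamma$-equivariant $\iff$ $f|_{\hA}$ has image in $R$'' completely precise, especially the step that $A\otimes_R S$ is generated as an algebra over $\hA$ by the image of $S$ (equivalently, that $\hA\otimes_R S\to A\otimes_R S$ is surjective), which is what lets one conclude that a homomorphism out of $A\otimes_R S$ is determined by, and can be reconstructed from, its restriction to $\hA$ together with its (Galois-prescribed) behavior on $S$. This is supplied by Galois descent for the ring extension $S/R$, so the proof is really an application of that machinery; the only genuine verification is the compatibility of the Hopf-algebraic $\Gamma$-action here with the group-theoretic one of \S\ref{Section:Affine}, which underlies the identification of $\uG(S)^\Gamma$ with the twisted loop group.
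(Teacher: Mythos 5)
Your proposal is correct and follows essentially the same route as the paper: the forward direction uses that a $\Gamma$-fixed $f$ restricted to the $\Gamma$-invariants $\hA$ takes values in $S^\Gamma=R$, and the converse extends $g$ by $S$-linearity through the descent isomorphism $\hA\otimes_R S\simeq A\otimes_R S$ and checks $\Gamma$-invariance on the image of $\hA$, which spans. The compatibility of the Hopf-algebraic $\Gamma$-action with the group-theoretic one of Section~\ref{Section:Affine} that you flag is indeed the point needing care, but it belongs to the application of the lemma rather than to its proof.
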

\begin{proof}
Let $f\in \uG(S)^\Gamma$. Then for all $x \in A\otimes_R S$ and $\gamma\in\Gamma$ we have 
\[
f(x) \,\,=\,\,({}^\gamma f)(x) \,\,:=\,\, {}^\gamma(f\circ \gamma^{-1}(x)).
\]
If $x\in \hA$ then $\gamma^{-1}(x)=x$ so that we get
\[
{}^\gamma(f(x)) \,\,=\,\, f(x) \qquad \forall\gamma\in\Gamma.
\]
Thus $f(x)\in R$. In other words,
\[
f|_{\hA} \in \Hom_R(\hA,R) = \uG_{(\g,\sigma)}(R).
\]

As for  the converse, let us first recall that the explicit isomorphism $\mu : \hA\otimes_R S \simeq A\otimes_R S$ given by Galois descent satisfies
\begin{equation}\label{GI}
 x \otimes s \mapsto sx \,\,\ \forall x \in \hA, \,\, s \in S
 \end{equation} 
where $sx$ is the scalar linear action of $s$ on the $S$-module $A \otimes_R S.$

Given  $g\in \Hom_R(\hA,R)$, consider $f=(g\otimes {\rm Id}_S)\circ \mu^{-1}$. Both $f$ and ${}^\gamma f$ are determined by their restriction to $\mu(\hA) := \mu(\hat{A} \otimes 1)\subset A\otimes_R S$ since both maps are  $S$-linear and $\mu(\hA)$ spans $A\otimes_R S$. But clearly ${}^\gamma f$ and $f$ agree on $\mu(\hA)$ because of (\ref{GI}). Thus $f\in \uG(S)^\Gamma$. We leave it to the reader to verify that these two processes are inverses of each other.
\end{proof}

%\begin{remark}\label{AoverQ}
%Let $A_0$ be the Hopf algebra corresponding to the $\F$-group $\uG_\F$. Then $A = A_0 \otimes_\F R$ and
%\[
%\uG(S) = \Hom_S(A\otimes_R S, S) = \Hom_S(A_0 \otimes_\F S, S).
%\]

%When computing the action of $\Gamma$ on $\Hom_S(A\otimes_R S, S)$ as in the Lemma, the key ingredient is the action of $\Gamma$ on $A\otimes_R S$. We know that $\Gamma = \Gal(S/R)$ is identified with a subgroup of $\Out(\g)$. The action of $\Gamma$ on $S$ is the Galois action, while that on $A= A_0 \otimes_\F R$ comes from $\Out(\g)$. This last action on $A$ is nothing but the $R$-linear extension of the action of $\Out(\g)$ on $A_0$ (see Remark \ref{identify}.) Thus
%\[
%\uG(S)^\Gamma = \Hom_S(A_0 \otimes_\F S, S)^\Gamma.
%\]
%\end{remark}

By Theorem~\ref{MainLong} and Lemma~\ref{prp:Twi-Sch_lem} we see that to establish Theorem~\ref{MainShort}(1)  we must show that if $R_\K = \K[t^{\pm 1}],$ then {\it  for $\Gamma(\K)$ and $S_{(\K)}$ as in} Section \ref{Section:Affine}, we have
\begin{equation}\label{MainShortproof1}
\bG\big(S_{(\K)}\big)^{\Gamma(\K)} = \uG_{(\g,\sigma)}(R_\K) = \Hom_R(\hA,R_\K)
\end{equation}
where the last equality is given by the definition of $\uG_{(\g,\sigma)}$.
\medskip

%{By Lemma~\ref{prp:Twi-Sch_lem}, we get 
%$\uG_{(\g,\sigma)}(R)\simeq \uG_0(S)^\Gamma$
%On the other hand,
%since $\uG_0(S)^\Gamma$ is nothing but
%the twisted Chevalley group of $\uG_0$ over $S$,
%we have $\uG_0(S)^\Gamma \simeq \hat G(X^{(r)}_N)/\mathbb{Q}^\times$,
%see \cite[Theorem~4.10]{MorPiaShi19} for the detail.
%This completes the proof.}

%Let $\K$  be a field extension of $\F$. We will use the subscript $(\,)_\K$ to denote the base change from $R =  \F[t^{\pm 1}]$ to  $\K[t^{\pm 1}].$ For example $A_\K = A \otimes_R \K[t^{\pm 1}].$

%Since $R_\K = \K[t^{\pm 1}],$ in view of Theorem \ref{MainLong} and Lemma \ref{prp:Twi-Sch_lem}  to finish the proof of  Theorem \ref{MainShort}(1) we must show that with $\Gamma(\K)$ and $S_{\K}$ {\it as in} \S 3, and $R_\K = \K[t^{\pm 1}],$ we have
%\begin{equation}\label{MainShortproof1}
%\bG\big(S_{\K}\big)^{\Gamma(\K)} =  \uG_{(\g,\sigma)}(R_\K) = \mathrm{Hom}_R(\hA,R_\K).
%\end{equation}
%The the last equality is given by the definition of $ \uG_{(\g,\sigma)}.$

Since $S_{(\K)}$ is a Galois extension of $R_\K$ with Galois group $\Gamma(\K)$, the same descent reasoning of Lemma~\ref{prp:Twi-Sch_lem} shows the existence of an $R_\K$-Hopf algebra $\widehat{A_{(K)}}$ such that
\begin{equation}
\bG\big(S_{(\K)}\big)^{\Gamma(\K)} = \Hom_{R_\K}( \widehat{A_{(\K)}},R_\K).
\end{equation}

Given the canonical isomorphisms $\Hom_R(\hA,R_\K) \simeq \Hom_{R_\K}(\hA \otimes_R R_\K,R_\K),$ to establish \eqref{MainShortproof1} amounts to proving the following.
\begin{lemma} \label{prp:base_change}
There exists a natural $R_\K$-algebra isomorphism $\widehat{A_{(\K)}} \simeq \hA \otimes_R R_\K$.
\end{lemma}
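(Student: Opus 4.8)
The plan is to track the base change $-\otimes_R R_\K$ through the Galois descent construction on both sides and check that it carries one descent datum to the other. First I would observe that $\hA\otimes_R R_\K$ is a priori only an $R_\K$-algebra, but that it becomes, after the further base change to $S_{(\K)}$, the ``same'' algebra as $A\otimes_R S_{(\K)}$: indeed $\hA\otimes_R S = A\otimes_R S$ by construction of $\hA$, and $S\otimes_R R_\K \simeq S_{(\K)}$ as $R_\K$-algebras (this last isomorphism is where the hypothesis $\mathrm{char}\,\F \neq r$ is used — it guarantees that adjoining an $r$-th root of $t$, and if necessary a primitive $r$-th root of unity, behaves the same over $\K$ as over $\F$, and that $S_{(\K)}/R_\K$ is Galois with the expected group $\Gamma(\K)$). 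Hence
\[
(\hA\otimes_R R_\K)\otimes_{R_\K} S_{(\K)} \simeq \hA\otimes_R S_{(\K)} \simeq (A\otimes_R S)\otimes_S S_{(\K)} \simeq A\otimes_R S_{(\K)},
\]
so $\hA\otimes_R R_\K$ is a form of $A\otimes_R R_\K$ split by $S_{(\K)}$.

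The next step is to identify the descent datum. By faithfully flat descent, $\hA\otimes_R R_\K$ is recovered from $A\otimes_R S_{(\K)}$ together with the $\Gamma(\K)$-semilinear action obtained by transporting, along the isomorphism above, the action $\gamma\mapsto {}^\gamma$ on $\hA\otimes_R S_{(\K)}$ coming from the second factor. Under $S\otimes_R R_\K\simeq S_{(\K)}$, the Galois group $\Gamma$ of $S/R$ is identified with $\Gamma(\K)$ (for $r=3$ this requires checking that $\mathrm{Gal}(\F(\xi)/\F)$ base-changes correctly to $\mathrm{Gal}(\K(\xi)/\K)$, which again is immediate once $\xi\notin\F\Rightarrow$ the minimal polynomial of $\xi$ stays irreducible or splits in the expected way; the case $\xi\in\F$ but $\xi\notin\K$ cannot occur, and $\xi\notin\F$ with $\xi\in\K$ collapses $\Gamma(\K)$ to $\Z/3\Z$, all of which must be matched up). Chasing through, the transported descent datum on $A\otimes_R S_{(\K)}$ sends $a\otimes s$ to $\gamma(a)\otimes {}^{\gamma'}s$ — exactly the datum used in Section~\ref{Section:Proof} to define $\widehat{A_{(\K)}}$ (note the ``diagram automorphism'' action $\gamma(a)$ is defined integrally, hence compatible with any base change, by Remark~\ref{identify}). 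Therefore the descended algebra $(A\otimes_R S_{(\K)})^{\Gamma(\K)} = \widehat{A_{(\K)}}$ coincides with $\hA\otimes_R R_\K$, and the identification is natural in $\K$ because every isomorphism used is.

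I would organize the write-up as: (i) the ring isomorphism $S\otimes_R R_\K\simeq S_{(\K)}$ compatible with Galois groups, stated as a sub-claim with the case analysis $r=1,2,3$; (ii) the resulting isomorphism of $S_{(\K)}$-algebras $(\hA\otimes_R R_\K)\otimes_{R_\K}S_{(\K)}\simeq A\otimes_R S_{(\K)}$; (iii) the verification that this isomorphism is $\Gamma(\K)$-equivariant for the two descent data; (iv) invoke uniqueness of descent ($\hA\mapsto (\hA\otimes_R S)^\Gamma$ is an equivalence) to conclude $\hA\otimes_R R_\K\simeq\widehat{A_{(\K)}}$. The main obstacle is step (iii) combined with the $r=3$ bookkeeping in step (i): one must be careful that the automorphism $\sigma'$ of $S_{(\K)}$ (which is only $\K$-linear, or only $\K(\xi)$-linear) is genuinely the base change of the automorphism $\sigma'$ of $S$, and that when $\xi\in\K$ the group $\Gamma(\K)\simeq\Z/3\Z$ is the correct quotient/subgroup of the $\Gamma\simeq S_3$ governing the descent of $\hA$ — i.e.\ that passing from $\F(\xi)$-descent to $\K$-descent when $\xi$ is already in $\K$ does not lose information. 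Everything else is a routine, if notation-heavy, application of faithfully flat descent.
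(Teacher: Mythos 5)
Your argument works, and agrees with the paper's, in every case where the Galois extension over $\K$ is literally the base change of the one over $\F$, i.e.\ where $S_{(\K)}\simeq S\otimes_R R_\K$ and $\Gamma(\K)=\Gamma$; there the descent datum visibly commutes with $-\otimes_\F\K$ and the conclusion is immediate. But your key sub-claim (i) is \emph{false} in exactly the case you flag and then wave away with ``all of which must be matched up'': $r=3$, $\xi\notin\F$, $\xi\in\K$ (e.g.\ $\F=\Q$ and $\K\ni\xi$). There $S=\F(\xi)[t^{\pm 1/3}]$, so
\[
S\otimes_R R_\K \;\simeq\; \F(\xi)\otimes_\F \K[t^{\pm 1/3}]\;\simeq\; \K[t^{\pm 1/3}]\times \K[t^{\pm 1/3}],
\]
which is not isomorphic to $S_{(\K)}=\K[t^{\pm 1/3}]$, and $\Gamma\simeq S_3$ is not identified with $\Gamma(\K)\simeq\Z/3\Z$. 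Consequently there is no isomorphism of Galois extensions along which to transport the descent datum, and steps (ii)--(iv) of your plan do not get off the ground in this case. This is precisely the only nontrivial content of the lemma, so the gap is genuine rather than cosmetic.

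To close it you must actually compare the $S_3$-descent defining $\hA$ with the $\Z/3\Z$-descent defining $\widehat{A_{(\K)}}$. One route is to work with the non-connected Galois extension $S\otimes_R R_\K$ of $R_\K$ with group $S_3$ (Galois extensions of rings need not be domains) and argue that $S_3$-invariants of the induced object coincide with $\Z/3\Z$-invariants over one factor. The paper takes a more hands-on route: first reduce to $\K=\F(\xi)$ by a base change in the ``easy'' direction, and then verify surjectivity of the canonical injection $\hA\otimes_R\F(\xi)[t^{\pm1}]\to\{x\in A\otimes_R\F(\xi)[t^{\pm1/3}]: x={}^\sigma x\}$ directly, by writing $x=\sum_i x_i\otimes b_i$ with $x_i\in\hA$ and $b_i$ running over the basis $1,\xi,t^{1/3},\xi t^{1/3},t^{2/3},\xi t^{2/3}$ of $S$ over $R$, and checking that ${}^\sigma x=x$ kills the coefficients of $t^{1/3},\xi t^{1/3},t^{2/3},\xi t^{2/3}$. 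Either way, an explicit argument is required where your write-up currently has only the assertion that the groups ``must be matched up.''
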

\begin{proof}
In what follows we will use the subscript $(\,)_\K$ to denote the base change from $R = \F[t^{\pm 1}]$ to  $\K[t^{\pm 1}]$. For example $A_\K = A \otimes_R \K[t^{\pm 1}]$.

If  the Galois extension $S_{(\K)}$ of $R_\K$ with group $\Gamma(\K)$ used in Section \ref{Section:Affine} is obtained from the $\F$-objects $S$ and $R$ and $\Gamma$ of the present section by the base change $\F\to\K$, that is $S_{(\K)}= S_\K$ and $\Gamma(\K) = \Gamma$, then clearly  $\widehat{A_{(\K)}} = \hA \otimes_R R_\K$. To see this just observe that the descent construction that produces $\widehat{A_{(\K)}}$ is given by an action of $\Gamma(\K)$ on $A_\K \otimes_{R_\K} S_{(\K)} \simeq (A \otimes_R S)\otimes_\F \K$, but this action takes place in $(A \otimes_R S) \otimes 1$ and commutes with the base changes to $\K$. The $R_\K$ descended object $\widehat{A_{(\K)}}$ is thus nothing but the descended $R$-object $\hA$ to which we apply the base change $\K/\F$. But
\[
\hA \otimes_\F \K \simeq \hA \otimes_R R \otimes_\F \K \simeq \hA \otimes_R R_\K.
\]

The only outstanding case is when $r=3$, $\xi \in \K$ and $\xi \notin \F.$ This case is different because $\Gamma = \rm{Gal}$$(S/R) \simeq$ $ S_3,$ while $\Gamma(\K) = {\rm Gal}(S_{(\K)}/R_\K) \simeq \Z/3\Z.$  By viewing $K$ as a field is an extension of $\F(\xi),$ a base change argument as above reduces the problem to the case $\K=\F(\xi)$, which we now address.

Since $\sigma$ generates our Galois group $\Gamma(\K)\simeq\Z/3\Z$, by definition $\widehat{A_{(\K)}}= \{ x\in A_\K\otimes_{R_\K}S_{(\K)} : x={}^\sigma x \}$. We thus need to show that the canonical map
\begin{equation}\label{end}
\hA\otimes_R \F(\xi)[t^{\pm1}] \,\,\longrightarrow\,\, \{ x\in A_\K \otimes_{R_\K} S_\K = A \otimes_R \F(\xi)[t^{\pm \frac{1}{3}}] : x = {}^\sigma x \}.
\end{equation}
is bijective. Recall that
\[
\hA \,\,=\,\, \{ x\in A\otimes_R \F(\xi)[t^{\pm1/3}] : x = {}^\gamma x \,\,\forall \gamma\in \Gamma\simeq S_3 \}.
\]
Since $\sigma$ is $\F(\xi)$-linear, \eqref{end} is clearly injective. To see that this map is surjective we reason as follows. Let $x\in A\otimes_{R_\K} S_\K \simeq \hA\otimes_{R_\K} S_\K$. Write
\[
x = x_1\otimes 1 + x_2\otimes \xi + x_3\otimes t^{\frac{1}{3}} + x_4\otimes \xi t^{\frac{1}{3}} + x_5\otimes t^{\frac{2}{3}} + x_6\otimes \xi t^{\frac{2}{3}}
\]
for some unique $x_1,\dots,x_6 \in \hA$. A simple calculation shows that ${}^\sigma x = x$ forces $x_3=x_4=x_5=x_6=0$. Our map is thus surjective.

This completes the proof of the Lemma and establishes Theorem~\ref{MainShort}(1).
\end{proof}

Let $R[\varepsilon]$ and $S[\varepsilon]$ be the rings of dual numbers of $R$ and $S$ respectively. We extend the action of $\Gamma$ to $S[\varepsilon]$ by fixing $\varepsilon$. Then $S[\varepsilon]$ is a Galois extension of $R[\varepsilon]$ with Galois group $\Gamma$.

The same reasoning used in Lemma~\ref{prp:Twi-Sch_lem} show that:
\begin{lemma} \label{prp:Twi-Sch_leme}
The restriction map
\[
\Hom_S(A\otimes_R S, S[\varepsilon]) \longrightarrow \Hom_R(\hA,S[\varepsilon]); \quad f\longmapsto f|_{\hA}
\]
induces an abstract group isomorphism $\uG(S[\varepsilon])^\Gamma \overset{\simeq}{\longrightarrow} \uG_{(\g,\sigma)}(R[\varepsilon])$.
\qed
\end{lemma}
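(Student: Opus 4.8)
The plan is to transcribe the proof of Lemma~\ref{prp:Twi-Sch_lem} almost word for word, with the value ring $S$ (resp.\ $R$) replaced throughout by the ring of dual numbers $S[\varepsilon]$ (resp.\ $R[\varepsilon]$); the domains $A\otimes_R S$, $\hA$ and the descent isomorphism $\mu\colon\hA\otimes_R S\overset{\simeq}{\longrightarrow}A\otimes_R S$ of \eqref{GI} are left untouched. The one fact that makes this legitimate is that, as just noted, $S[\varepsilon]$ is Galois over $R[\varepsilon]$ with group $\Gamma$; in particular $(S[\varepsilon])^\Gamma = R[\varepsilon]$, which is clear since $\Gamma$ fixes $\varepsilon$ and $S^\Gamma=R$ (equivalently, since $S[\varepsilon]=S\otimes_R R[\varepsilon]$ and taking $\Gamma$-invariants commutes with the flat base change $R\to R[\varepsilon]$).

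First I would verify that the restriction map is well defined: for $f\in\uG(S[\varepsilon])^\Gamma$ and $x\in\hA$ one has $\gamma^{-1}(x)=x$, so ${}^\gamma(f(x))=({}^\gamma f)(x)=f(x)$ for all $\gamma\in\Gamma$, whence $f(x)\in(S[\varepsilon])^\Gamma=R[\varepsilon]$ and $f|_{\hA}\in\Hom_R(\hA,R[\varepsilon])=\uG_{(\g,\sigma)}(R[\varepsilon])$. This is a homomorphism of abstract groups because the Hopf-algebra structure of $\hA$ over $R$ is the one induced by Galois descent from that of $A\otimes_R S$ (the $\Gamma$-action being by Hopf algebra automorphisms, the coproduct of $A\otimes_R S$ carries $\Gamma$-invariants to $\Gamma$-invariants). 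For the inverse, given $g\in\Hom_R(\hA,R[\varepsilon])$ I set $f=(g\otimes_R\mathrm{Id}_S)\circ\mu^{-1}\colon A\otimes_R S\to R[\varepsilon]\otimes_R S=S[\varepsilon]$; both $f$ and ${}^\gamma f$ are $S$-linear and agree on $\mu(\hA\otimes 1)$ by \eqref{GI}, and the latter spans $A\otimes_R S$ over $S$, so $f={}^\gamma f$. Finally I would check, exactly as in Lemma~\ref{prp:Twi-Sch_lem} and again using \eqref{GI}, that $g\mapsto f$ and $f\mapsto f|_{\hA}$ are mutually inverse.

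I do not anticipate a genuine obstacle here: the content of the lemma is precisely that introducing dual numbers changes nothing, because $\varepsilon$ is a central, $\Gamma$-fixed element and $S[\varepsilon]$ arises from $S$ by the flat base change $R\to R[\varepsilon]$, so the Galois descent data are unaffected and $\mu$ persists unchanged. The only point one must not skip is the identity $(S[\varepsilon])^\Gamma=R[\varepsilon]$ — equivalently, that extending the $\Gamma$-action to $S[\varepsilon]$ by fixing $\varepsilon$ genuinely makes $S[\varepsilon]/R[\varepsilon]$ Galois — since the well-definedness of the restriction map rests entirely on it.
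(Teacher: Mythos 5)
Your proposal is correct and is exactly what the paper intends: the paper's own ``proof'' consists of the single remark that $S[\varepsilon]$ is Galois over $R[\varepsilon]$ with group $\Gamma$ (the action fixing $\varepsilon$) and that the same reasoning as in Lemma~\ref{prp:Twi-Sch_lem} then applies. You have simply carried out that transcription and correctly isolated the one point that needs checking, namely $(S[\varepsilon])^\Gamma = R[\varepsilon]$.
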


Consider the exact sequence of abstract groups \cite{DG,Wat}
\begin{equation}\label{Lie}
0 \longrightarrow \fLie(\uG)(S) \longrightarrow \uG(S[\varepsilon]) \overset{\varepsilon\mapsto0}\longrightarrow \uG(S) \longrightarrow 1.\footnote{\, The group $\fLie(\uG)(S)$ is abelian and its customary to use additive notation for its composition law. The left exact $0$ is thus more natural than $1$. See \cite[II~\S4]{DG} for details.}
\end{equation}

By taking $\Gamma$-invariants in (\ref{Lie} and appealing to Lemmas~\ref{prp:Twi-Sch_lem} and~\ref{prp:Twi-Sch_leme}, we obtain
\begin{equation}\label{Lieinv}
0 \longrightarrow (\fLie(\uG)(S))^\Gamma \longrightarrow \uG_{(\g,\sigma)}(R[\varepsilon]) \longrightarrow \uG_{(\g,\sigma)}(R) \longrightarrow 1
\end{equation}
(the surjectivity follows from the existence of sections of the canonical morphisms $R[\varepsilon]\to R$ and $S[\varepsilon]\to S$).

Assume in what follows that we are in characteristic $0$, namely that $\F=\Q$. Then $\fLie(\uG)(S) = \g \otimes_\Q S$ so that (\ref{Lieinv}) yields.
\begin{lemma} \label{prp:Lie^Gamma}
There is a canonical $R$-Lie algebra isomorphism
%\newline
$(\g\otimes_\Q S)^\Gamma \simeq \fLie(\uG_{(\g,\sigma)})(R)$.
\qed
\end{lemma}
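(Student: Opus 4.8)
The plan is to read the isomorphism straight off the short exact sequence \eqref{Lieinv} together with the very definition of the Lie algebra of an affine group scheme, and then to verify that the transported bracket is the expected one. First I would recall that for any affine $R$-group scheme $\mathbf{H}$ the group $\fLie(\mathbf{H})(R)$ is \emph{by definition} the kernel of $\mathbf{H}(R[\varepsilon]) \to \mathbf{H}(R)$ (the map induced by $\varepsilon \mapsto 0$), equipped with its canonical $R$-module structure and its Lie bracket; see \cite[II~\S4]{DG}. Applying this with $\mathbf{H} = \uG_{(\g,\sigma)}$ and comparing with \eqref{Lieinv}, whose left-hand term is precisely that kernel, one obtains a canonical isomorphism of $R$-modules
\[
\big(\fLie(\uG)(S)\big)^\Gamma \;\xrightarrow{\ \simeq\ }\; \fLie(\uG_{(\g,\sigma)})(R).
\]
The identification is canonical because \eqref{Lieinv} is assembled entirely from canonical maps: the dual-numbers sequence \eqref{Lie}, the restriction maps of Lemmas~\ref{prp:Twi-Sch_lem} and~\ref{prp:Twi-Sch_leme}, and the descent isomorphism $\mu$ of \eqref{GI}.

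Next I would insert the characteristic-$0$ computation. Since $\F = \Q$ and $\uG = \bG$ is smooth and split with $\Lie(\bG)\otimes_\Z\Q \simeq \g$, we have $\fLie(\uG)(S) = \Lie(\bG)\otimes_\Z S = \g\otimes_\Q S$, so the displayed map becomes an $R$-module isomorphism $(\g\otimes_\Q S)^\Gamma \simeq \fLie(\uG_{(\g,\sigma)})(R)$.

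It then remains to check that this is an isomorphism of Lie algebras, the bracket on the left being the restriction of the natural bracket $[\,x\otimes s,\ y\otimes s'\,] = [x,y]\otimes ss'$ of $\g\otimes_\Q S$ (viewed as an $R$-Lie algebra by restriction of scalars). For this I would argue that the whole of \eqref{Lie} is $\Gamma$-equivariant and functorial, and that the bracket on either side is computed by the same group-theoretic commutator in $\uG(S[\varepsilon_1,\varepsilon_2])$: the $\Gamma$-action on $\uG$ is by group scheme automorphisms (equivalently, by Hopf algebra automorphisms of $A$, cf.~Remark~\ref{identify}), hence induces Lie algebra automorphisms of $\g\otimes_\Q S$, and the restriction identifications of Lemmas~\ref{prp:Twi-Sch_lem} and~\ref{prp:Twi-Sch_leme} are compatible with passage to dual numbers. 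Equivalently, one may invoke compatibility of $\fLie$ with the faithfully flat base change $S/R$: from $\uG_{(\g,\sigma)}\times_R S \simeq \bG_S$ (Remark~\ref{Reductive}) one gets an isomorphism of $S$-Lie algebras $\fLie(\uG_{(\g,\sigma)})(S)\simeq \g\otimes_\Q S$ which is $\Gamma$-semilinear for the twisted action, and taking $\Gamma$-invariants recovers the map above, now manifestly as a Lie algebra homomorphism.

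The only point requiring genuine care — and hence the main obstacle — is precisely this last bracket bookkeeping: the $R$-module isomorphism is immediate from \eqref{Lieinv}, but one must be sure that the Lie bracket coming from the $R$-group scheme $\uG_{(\g,\sigma)}$ agrees, under the fixed-point identification, with the ``loop-algebra type'' bracket that $(\g\otimes_\Q S)^\Gamma$ inherits from $\g\otimes_\Q S$. This is where it matters that $\Gamma$ acts by automorphisms of the \emph{group scheme} $\uG$, not merely semilinearly on an underlying module; everything else in the proof is formal.
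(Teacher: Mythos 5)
Your argument is the same as the paper's: the isomorphism is read off directly from the $\Gamma$-invariants sequence \eqref{Lieinv} together with the identification $\fLie(\uG)(S)=\g\otimes_\Q S$ in characteristic $0$. Your additional verification that the bracket is preserved is a sensible amplification of a point the paper leaves implicit, but it does not change the route.
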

\bigskip

Just as we write $R$ instead of $R_\Q$, for convenience we will henceforth denote $\g_\Q(X_N^{(r)})$ simply by $\g(X_N^{(r)})$.

Consider the $R$-Lie algebra $\Lie(\uG_{(\g,\sigma)}) = \fLie(\uG_{(\g,\sigma)})(R)$. By descent considerations, as an $R$-module $\Lie(\uG_{(\g,\sigma)})$ is projective of rank $\dim_\Q (\g)$ (in fact free of this rank since $R$ is a principal ideal domain). Consider the $R$-functor of Lie algebras
\begin{equation}\label{Lie}
{\Lie(\uG_{(\g,\sigma)})}_\fa: R' \longmapsto \Lie(\uG_{(\g,\sigma)}) \otimes_R R'.
\end{equation}
It is an affine scheme represented by the symmetric algebra of the $R$-dual of $\Lie(\uG_{(\g,\sigma)})$. Since $\uG_{(\g,\sigma)}$ is smooth (see Remark \ref{Reductive}) we have by \cite[II~\S4.8]{DG}  a canonical isomorphism
\begin{equation}\label{LieIso}
{\Lie(\uG_{(\g,\sigma)})}_\fa \simeq \fLie(\uG_{(\g,\sigma)}).
\end{equation}

The following result is the key to the parts of the Main Theorem that remain to be proved.
\begin{proposition}\label{RLie}
There exists a natural $R$-Lie algebra isomorphism $\g(X_N^{(r)}) \simeq \Lie(\uG_{(\g,\sigma)})$.
\end{proposition}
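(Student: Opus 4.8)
The plan is to realize $\g(X_N^{(r)})$ as a twisted loop algebra and match it with $\Lie(\uG_{(\g,\sigma)})$ via the same Galois-descent bookkeeping already used for the groups. Recall (as indicated in \S\ref{section1.1} and \cite{P1}) that the central quotient $\g(X_N^{(r)})$ of the derived algebra of the affine Kac-Moody Lie algebra is, over $\Q$, isomorphic to the twisted loop algebra
\[
L(\g,\sigma) \,=\, \bigoplus_{i\in\Z} \g_{\bar i}\otimes_\Q \Q\, t^{i/r},
\]
where $\g=\bigoplus_{\bar i\in\Z/r\Z}\g_{\bar i}$ is the eigenspace decomposition of $\g$ under $\sigma$ (using a fixed primitive $r$-th root of unity $\xi$). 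The first step is to package this as a descent datum: over $S=\Q[t^{\pm1/r}]$ (respectively $S=\Q(\xi)[t^{\pm1/3}]$ in the exceptional $r=3$, $\xi\notin\F$ case) one has $\g\otimes_\Q S$, with the semilinear $\Gamma$-action ${}^{\gamma}(z\otimes s)=\gamma(z)\otimes{}^{\gamma'}s$, exactly as in \S\ref{Section:Proof}. Then $(\g\otimes_\Q S)^\Gamma$ is, by a direct eigenspace computation, isomorphic to $L(\g,\sigma)$ as an $R$-Lie algebra: an element $\sum_i z_i\otimes t^{i/r}$ is $\Gamma$-fixed iff $z_i\in\g_{\bar i}$, and in the $S_3$ case the extra generator $\omega'$ imposes no further constraint on the $R=\F[t^{\pm1}]$-span once one restricts to the $\sigma'$-fixed part (the argument is the six-term computation already carried out in the proof of Lemma~\ref{prp:base_change}, transported from $A$ to $\g$).

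The second step is purely formal: Lemma~\ref{prp:Lie^Gamma} gives a canonical $R$-Lie algebra isomorphism $(\g\otimes_\Q S)^\Gamma\simeq\fLie(\uG_{(\g,\sigma)})(R)=\Lie(\uG_{(\g,\sigma)})$. Composing with the identification of Step~1 and with the standard isomorphism $\g(X_N^{(r)})\simeq L(\g,\sigma)$ yields the asserted $\g(X_N^{(r)})\simeq\Lie(\uG_{(\g,\sigma)})$. One should check compatibility of the Lie bracket on $(\g\otimes_\Q S)^\Gamma$ (inherited from the $S$-bilinear extension of the bracket on $\g$, restricted to invariants) with the loop bracket $[z\otimes t^{a/r},\,w\otimes t^{b/r}]=[z,w]\otimes t^{(a+b)/r}$; this is immediate from the construction.

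The main obstacle — and the place where care is genuinely needed — is twofold. First, one must pin down the precise normalization identifying $\g(X_N^{(r)})$ with $L(\g,\sigma)$: the affine Kac-Moody algebra is built from a specific Chevalley basis and a specific choice of $\sigma$ as a diagram automorphism, and one has to verify that the $\sigma$ fixed once and for all in \S\ref{section1.1} (acting on $\g$ via the chosen section and Killing couple) produces the \emph{same} grading, up to isomorphism, as the one used in the Kac-Moody construction — this is where the hypothesis that $\sigma$ is a diagram automorphism of order exactly $r$, together with uniqueness up to conjugacy, is used, and where for $D_4^{(3)}$ Remark~\ref{iso} (independence of the choices of $\sigma$ and $\omega$) must be invoked. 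Second, the word ``natural'' in the statement has to be honored: the isomorphism must be canonical given the data $(\g,\sigma,\xi)$ fixed earlier, so one should phrase Step~1 as an explicit map rather than merely counting dimensions. Once these normalization matters are settled, the descent argument is exactly the Lie-algebra shadow of the group-level argument already in place, so no new technical input is required.
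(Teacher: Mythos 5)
Your proposal is correct and follows essentially the same route as the paper: identify $\g(X_N^{(r)})$ with the $\Gamma$-fixed points $(\g\otimes_\Q S)^\Gamma$ (the twisted loop algebra realization, which also exhibits the natural $R$-module structure) and then apply Lemma~\ref{prp:Lie^Gamma}. The only difference is that the paper outsources that first identification --- including its naturality and the $\Q$-rationality subtleties in the $D_4^{(3)}$ case that you rightly flag --- to the generators-and-relations description of \cite[Cor.~4.10]{MorPiaShi21}, rather than redoing the eigenspace computation by hand.
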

\begin{proof}
\cite[Cor.~4.10]{MorPiaShi21} explicitly describes, inside the $S$-Lie algebra $\g\otimes_\Q S$, the generators and relations of a $\Q$-Lie algebra $\mathcal{L}$ isomorphic to $ \g(X_N^{(r)})$. These generators and relations are exactly the same as those given in the definition of the affine algebra $ \g(X_N^{(r)})$. To reflect this fact, we say that $\mathcal{L}$ is naturally isomorphic to $ \g(X_N^{(r)})$. On the other hand, it is also shown that $\mathcal{L} = (\g\otimes_R S)^\Gamma$ as sets. This shows that $\mathcal{L}$ is stable under the scalar action of $R$, hence is an $R$-Lie algebra. By the last Lemma $\mathcal{L} \simeq \Lie(\uG_{(\g,\sigma)})$ as $R$-Lie algebras.
\end{proof}

\begin{proof}[Proof of Theorem~\ref{MainShort}(2)]
By \eqref{LieIso} for any ring extension $R'$ of $R$ there exists a canonical $R'$-Lie algebra isomorphism
\begin{equation}\label{canisoLie}
\Lie(\uG_{(\g,\sigma)}) \otimes_R R' \to \fLie(\uG_{(\g,\sigma)})(R').
\end{equation}

The infinite-dimensional $\Q$-Lie algebra $ \g(X_N^{(r)})$ is given by generators and relations corresponding to the affine Cartan matrix of type $X_N^{(r)}$. By \cite{MP} for any field extension $\K$ of $\Q$ there is a canonical $\K$-Lie algebra isomorphisms
\begin{equation}\label{a}
\g(X_N^{(r)}) \otimes_\Q \K \simeq \g_\K(X_N^{(r)}).
\end{equation}

Using the natural $R$-Lie algebra structure on $\g(X_N^{(r)})$ given in Proposition~\ref{RLie} and the canonical isomorphism
\begin{equation}\label{b}
\g(X_N^{(r)}) \otimes_\Q \K \simeq \g(X_N^{(r)}) \otimes_R R \otimes_\Q \K \simeq \g(X_N^{(r)}) \otimes_R R_\K
\end{equation}
gives $\g_\K(X_N^{(r)})$ a natural $R_\K$-Lie algebra structure. This construction is clearly functorial in $\K$.

By appealing to Lemma~\ref{prp:Lie^Gamma} and the  three Lie algebra isomorphisms of  (\ref{b}) we get
\begin{gather*}
\g_\K(X_N^{(r)}) \simeq \g(X_N^{(r)})\otimes_\Q\K \simeq \Lie(\uG_{(\g,\sigma)})\otimes_\Q\K \simeq \Lie(\uG_{(\g,\sigma)})\otimes_RR\otimes_\Q\K \\
\simeq \Lie(\uG_{(\g,\sigma)})\otimes_RR_\K \simeq \fLie(\uG_{(\g,\sigma)})(R_\K) = \fLie(\uG_{(\g,\sigma)})(\K[t^{\pm1}]).
\end{gather*}
This establishes the second part of the Theorem.
%\smallskip

\subsection{}
It remains to show uniqueness. For convenience let us denote $\uG_{(\g,\sigma)}$ simply by $\fG$. Let $\fG'$ be a semisimple simply connected $R$-group. By \cite[Exp.~XXIV~Prop.~7.3.1]{SGA3} the affine $R$-schemes $\Isom(\fG, \fG')$ and $\Isom\big({\fLie({\fG}), \fLie(\fG')}\big)$ are canonically isomorphic. If in addition $\fG'$ satisfies Theorem~\ref{MainShort}(2), then the $R$-Lie algebras $\Lie(\fG) $ and $\Lie(\fG') $ are isomorphic. From \eqref{LieIso} it follows that $\Isom\big({\fLie(\fG), \fLie(\fG')}\big)(R) \neq \emptyset$. But then $\Isom(\fG, \fG')(R) \neq \emptyset$. Thus the $R$-group schemes $\fG$ and $\fG'$ are isomorphic.
\end{proof}

\begin{remark}\label{rationality} Let $\F$ be a prime field. It is natural to ask if the $\F[t^{\pm 1}]$-groups  $\uG_{(\g,\sigma)}$ that we have defined exists over $\Z[t^{\pm 1}].$ To be precise, the question is whether there exists a reductive group scheme $\fG$ over $\Z[t^{\pm 1}]$ with the property that $\fG \times_{\Z[t^{\pm 1}]} \F[t^{\pm 1}] \simeq \uG_{(\g,\sigma)}$.

If $r=1$ the answer is affirmative given that  $\uG_{(\g,\sigma)}$ already exists over $\Z$. Indeed $\uG_{(\g,\sigma)} \simeq \bG \times_\Z \F[t^{\pm 1}]$. The situation is entirely different for $r = 2,3$. For convenience let us denote $\Z [t^{\pm 1}]$ by $R_0$. We will show that a $\fG$ with the desired properties does not exist. For $s \in {\rm Spec}(R_0)$ let $\bar{\kappa}(s)$ be the algebraic closure of the residue field of $s$. The geometric fiber $\fG_{\bar{\kappa}(s)}$ is a (connected) reductive algebraic group over $\bar{\kappa}(s)$. The (isomorphism class) of a root datum for this group is called {\it the type of $\fG$ at $s$} (\cite[Exp.~XXII]{SGA3}). The type is locally constant, hence constant since $R_0$ is connected. By assumption $\fG$  becomes $\uG_{(\g,\sigma)}$ after a base change. Since type is invariant under an arbitrary base change, it follows that $\fG$ is of type $\bG$ (more precisely of type $(X_N, \Pi, \Pi^{\vee}, P)$, where $P$ is the weight lattice of $(\Delta, \Pi)$).

By \cite[Exp.~XXIV~Cor.~1.8]{SGA3} $\fG$ is a (twisted) form of $\bG_0$, where $\bG_0 = \bG \times _\Z R_0.$ Let $c(\fG) \in H^1_{\et}\big(R_0, \bAut(\uG)\big)$ be the corresponding element.\footnote{\, As customary we write $H^1_\et \big(R_0,\bAut(\uG)\big)$ instead of $H^1_\et \big(R_0,\bAut(\uG_{R_0})\big)$. Similarly for the other two terms of the sequence (\ref{exactH^1}).} 

By passing to cohomology on (\ref{exact}) we obtain the exact sequence of pointed sets
\begin{equation}\label{exactH^1}
H^1_\et (R_0, \bG_{\rm ad}) \longrightarrow H^1_\et \big(R_0,\bAut(\uG)\big) \overset{H^1(\rho)}{\longrightarrow} H^1_\et \big(R_0,\bOut(\g)\big) \longrightarrow 1
\end{equation}

By \cite[Exp.~XI~\S5]{SGA1}, $H^1_\et \big(R_0,\bOut(\g)\big)$ can be identified with the set of conjugacy classes of continuous homomorphisms of the algebraic fundamental group $\pi_1(R_0)$ of $R_0$ (at an arbitrary geometric base point) to the finite group $\Out(\g)$. Since $\pi_1(R_0)$ is trivial, we have $H^1_\et \big(R_0,\bOut(\g)\big) = 1$. As a consequence $H^1(\rho)\big(c(\fG)\big) = 1$, so that $\fG$ is an inner form of $\bG_{R_0}$.

By construction $\uG_{(\g,\sigma)}$ is a form of ${\bG_{R_0}}_{_\F}$, where ${R_0}_{\F} = R_0 \otimes_\Z \F = \F[t ^{\pm 1}].$ As above, it corresponds to an element $c(\uG_{(\g,\sigma)}) \in H^1_{\et}\big({R_0}_{_\F}, \bAut(\uG)\big)$. Since by assumption $\fG \times_{\Z[t^{\pm 1}]} \F[t^{\pm 1}] \simeq \uG_{(\g,\sigma)}$, by functoriality $c(\fG)$ is mapped to $c(\uG_{(\g,\sigma)})$ under the canonical map $H^1_{\et}\big(R_0, \bAut(\uG)\big) \to H^1_{\et}\big({R_0}_{_\F}, \bAut(\uG)\big)$. By functoriality again, $H^1(\rho)\big(c(\fG)\big) = 1$ implies that $H^1(\rho_\F)\big(c(\uG_{(\g,\sigma)})\big) = 1$. But this is a contradiction since by construction $\uG_{(\g,\sigma)}$ is not an inner form of $\bG_{R_\F}$.
\end{remark}

\noindent {\bf Acknowledgment} We would like to express our sincere gratitude to the referee for his/her thorough reading of our manuscript, and the many useful comments and suggestions.

\bigskip

%%%%%%%%%%%% References %%%%%%%%%%%%%%%


\begin{thebibliography}{99}
\bibitem[Abe]{Abe77}
\textsc{E.~Abe},
{Coverings of twisted Chevalley groups over commutative rings},
Sci. Rep. Tokyo Kyoiku Daigaku Sect.~A~{\bf13}, no.~366--382 (1977), 194--218.
\bibitem[Bo]{Bo}
\textsc{N.~Bourbaki},
{Groupes et algebr\`es de Lie},
Chapitres~4, 5 et 6, Hermann, Paris 1969.
\bibitem[CHR]{CHR}
\textsc{S.~Chase, D.~K.~Harrison and A.~Rosenberg},
{Galois theory and cohomology of commutative rings},
Mem.~Amer.~math.~Soc.~{\bf52} (1965), 15--33.
\bibitem[DG]{DG}
\textsc{M.~Demazure and P.~Gabriel},
{Groupes alg\'ebriques. Tome~I: Geometrie algebrique, generalites, groupes commutatifs. (French)},
North-Holland Publishing Co., Amsterdam, 1970.
\bibitem[GP]{GP}
\textsc{P.~Gille and A.~Pianzola},
{Galois cohomology and forms of algebras over Laurent polynomial rings},
Math. Ann.~{\bf338}, no.~2 (2007), 497--543.
\bibitem[HLR]{HLR}
\textsc{T.J.~Haines, J.~Lauren\c{c}o and T.~Richarz},
{On the normality of Schubert varieties: Remaining cases in positive characteristic},
preprint \texttt{arXiv:1806.11001v4} [math.AG].
\bibitem[Kac1]{Kac1}
\textsc{V.~G.~Kac},
{Simple irreducible graded Lie algebras of finite growth},
Izv.~Akad.~Nauk SSSR Ser. Mat.~{\bf32} (1968), 1323--1367.
\bibitem[Kac2]{Kac2}
\textsc{V.~G.~Kac},
{Infinite dimensional Lie algebras. Third edition.},
Cambridge University Press, Cambridge, 1990.
\bibitem[KO]{KO}
\textsc{M.~A.~Knus and M.~Ojanguren},
{Th\'eorie de la descente et alg\`ebres d'{A}zumaya},
Lecture Notes in Mathematics~{\bf389}, Springer-Verlag, Berlin-New York, 1974.
\bibitem[Kmr]{Kmr}
\textsc{S.~Kumar},
{Kac-Moody groups, their flag varieties and representation theory},
Progress in Mathematics~{\bf204}, Birkhauser Boston, Inc., Boston, 2002.
\bibitem[L]{L}
\textsc{J.~Louren\c{c}o},
{Grassmanniennes affines tordus sur les entiers},
preprint \texttt{arXiv:1912.11918v3} [math.AG].
\bibitem[Mth]{Mth}
\textsc{O.~Mathieu},
{Construction d'un groupe de Kac-Moody et applications},
Compositio Math.~{\bf69}, no.~1 (1989), 37--60.
\bibitem[Mdy1]{Mdy1}
\textsc{R.~V.~Moody},
{A new class of Lie algebras},
J.~Algebra~{\bf10} (1968), 211--230.
\bibitem[Mdy2]{Mdy2}
\textsc{R.~V.~Moody},
{Euclidean Lie algebras},
Canadian J.~Math.~{\bf21} (1969), 1432--1454.
\bibitem[MP]{MP}
\textsc{R.~V.~Moody and A.~Pianzola},
{Lie algebras with triangular decomposition},
Canadian Mathematical Society Series of Monographs and Advanced Texts.
A Wiley-Interscience Publication. John Wiley \& Sons, Inc., New York, 1995.
\bibitem[MT]{MT}
\textsc{R.~V.~Moody and K.~L.~Teo},
{Tits' systems with crystallographic Weyl groups},
J.~Algebra~{\bf21} (1972), 178--190.
\bibitem[M1]{Mor79}
\textsc{J.~Morita},
{Tits' systems in Chevalley groups over Laurent polynomial rings},
Tsukuba J.~Math.~{\bf3}, no.~2 (1979), 41--51.
\bibitem[M2]{Mor81}
\textsc{J.~Morita},
{On some twisted Chevalley groups over Laurent polynomial rings},
Canadian J.~Math.~{\bf33}, no.~5 (1981), 1182--1201.
\bibitem[MPS]{MorPiaShi21}
\textsc{J.~Morita, A.~Pianzola and T.~Shibata},
{Affine Kac-Moody Groups as Twisted Loop Groups obtained by Galois Descent Considerations},
preprint \texttt{arXiv:2105.00156v3} [math.GR] to appear in Mathematical Journal of Okayama University.
\bibitem[PR]{PR}
\textsc{G.~Pappas and M.~Rapoport},
{Twisted loop groups and their affine flag varieties},
Advances in Mathematics~{\bf219}, no.~1 (2008), 118--198.
\bibitem[PK]{PK}
\textsc{D.~H.~Peterson and V.~G.~Kac},
{Infinite flag varieties and conjugacy theorems},
Proc. Natl. Acad. Sci. U.S.A.~{\bf80}, no.~6,~i. (1983), 1778--1782.
\bibitem[P]{P1}
\textsc{A.~Pianzola},
{Vanishing of $H^1$ for Dedekind rings and applications to loop algebras},
C.~R.~Math.~Acad. Sci. Paris, Ser.~I~{\bf340} (2005), 633--638.
\bibitem[Rem]{Rem}
\textsc{B.~R\'emy},
{Groupes de Kac-Moody d\'eploy\'es et presque d\'eploy\'es},
Ast\'erisque No.~{\bf277} (2002).

\bibitem[SGA1]{SGA1}
{Rev\^etements \'etales et groupe fondamental (SGA1)},
S\'eminaire de g\'eom\'etrie alg\'ebrique du Bois Marie 1960--1961, dirig\'e par A.~Grothendieck,
Lecture Notes in Math.~{\bf224}, Springer, Berlin, 1971.
\bibitem[SGA3]{SGA3}
{Sch\'emas en groupes (SGA3)},
S\'eminaire de G\'eom\'etrie alg\'ebrique du Bois Marie 1962--1964, dirig\'e par M.~Demazure et A.~Grothendieck,
Lecture Notes in Math.~{\bf151}--{\bf153}, Springer, Berlin, 1970.

\bibitem[Ste]{Ste}
\textsc{R.~Steinberg},
{Lectures on Chevalley groups},
Yale University, New Haven, Conn., 1968.
\bibitem[Tit1]{Tit1}
\textsc{J.~Tits},
{Alg\`ebres de Kac-Moody et groupes associ\'es},
Annuaire du Coll\`ege de France (1980--1981) 75-87 et (1981--1982) 91--106.
\bibitem[Tit2]{Tit2}
\textsc{J.~Tits},
{Groups and group functors attached to Kac-Moody data},
Lecture Notes in Math.,~{\bf1111}, Springer, Berlin, 1985.
\bibitem[Tit3]{Tit3}
\textsc{J.~Tits},
{Uniqueness and presentation of Kac-Moody groups over fields},
J.~Algebra~{\bf105}, no.~2 (1987), 542--573.
\bibitem[Tit4]{Tit4}
\textsc{J. ~Tits},
{Classification of semisimple groups},
Proceedings of the Symposium in Pure Mathematics {\bf9}, Boulder, Colorado 1965. American Mathematical Society.
\bibitem[Wat]{Wat}
\textsc{W.~C.~Waterhouse},
{Introduction to Affine Group Schemes},
Graduate Texts in Mathematics,~{\bf66}, Springer-Verlag, New York-Berlin, 1979.
\end{thebibliography}
\end{document}